\newtheorem{thm}{Theorem}[section]
\newtheorem{lm}[thm]{Lemma}
\newtheorem{re}[thm]{Remark}
\newtheorem*{mT}{Main Theorem}
\numberwithin{equation}{section}
\begin{document}

\title[Real hypersurfaces with parallel structure Jacobi operator]{Real hypersurfaces in the complex quadric with parallel structure Jacobi operator}
\author{\textsc{Young Jin Suh}}

\address{Kyungpook National University \\ College of Natural Sciences\\  Department of Mathematics \\
Daegu 702-701\\ Republic of Korea}
\email{yjsuh@knu.ac.kr}
\date{}

\begin{abstract}
First we introduce the notion of parallel structure Jacobi operator for real hypersurfaces in the complex quadric $Q^m = SO_{m+2}/SO_mSO_2$ . Next we
give a complete classification of  real hypersurfaces in $Q^m = SO_{m+2}/SO_mSO_2$ with parallel structure Jacobi operator.
\end{abstract}

\maketitle
\thispagestyle{empty}

\footnote[0]{This work was supported by grant Proj. No. NRF-2015-R1A2A1A-01002459 from National Research Foundation of Korea. \\
2010 \textit{Mathematics Subject Classification}: Primary 53C40. Secondary 53C55.\\
\textit{Key words}: parallel structure Jacobi operator, $\frak A$-isotropic, $\frak A$-principal, K\"{a}hler structure, complex conjugation, complex quadric}

\section{Introduction}\label{section 1}

\vskip 6pt
In a class of Hermitian symmetric spaces of rank 2, usually we can give examples of
 Riemannian symmetric spaces $SU_{m+2}/S(U_2U_m)$ and $SU_{2,m}/S(U_2U_m)$, which are said to be complex two-plane Grassmannians and complex hyperbolic two-plane Grassmannians respectively (see \cite{BS1}, \cite{BSN}, \cite{SH} and \cite{S}). These are viewed as Hermitian symmetric spaces and quaternionic K\"{a}hler symmetric spaces equipped with the K\"{a}hler structure $J$ and  the quaternionic K\"{a}hler structure ${\mathfrak J}$ on $SU_{2,m}/S(U_2U_m)$. The rank of $SU_{2,m}/S(U_2U_m)$ is $2$ and there are exactly two types of singular tangent vectors $X$ of $SU_{2,m}/S(U_2U_m)$ which are characterized by the geometric properties $JX \in {\mathfrak J}X$ and $JX \perp {\mathfrak J}X$ respectively.
\par
\vskip 6pt

 As another kind of Hermitian symmetric space with rank $2$ of compact type different from the above ones, we can give the example of complex quadric $Q^m = SO_{m+2}/SO_mSO_2$, which is a complex hypersurface in complex projective space ${\Bbb C}P^m$ (see Berndt and Suh \cite{BS2}, and Smyth \cite{BS}). The complex quadric also can be regarded as a kind of real Grassmann manifold of compact type with rank 2 (see Kobayashi and Nomizu \cite{KO}).  Accordingly, the complex quadric admits both a complex conjugation structure $A$ and a K\"ahler structure $J$, which anti-commutes with each other, that is, $AJ=-JA$. Then for $m{\ge}2$ the triple $(Q^m,J,g)$ is a Hermitian symmetric space of compact type with rank 2 and its maximal sectional curvature is equal to $4$ (see Klein \cite{K} and Reckziegel \cite{R}).
 \par
 \vskip 6pt
In the complex projective space ${\mathbb C}P^m$, a full classification was obtained by Okumura in \cite{O}. He proved that the Reeb flow on a real hypersurface in ${\mathbb C}P^m = SU_{m+1}/S(U_mU_1)$ is isometric if and only if $M$ is an open part of a tube around a totally geodesic ${\mathbb C}P^k \subset {\mathbb C}P^m$ for some $k \in \{0,\ldots,m-1\}$. In the complex $2$-plane Grassmannian $G_2({\mathbb C}^{m+2})= SU_{m+2}/S(U_mU_2)$, Berndt and Suh \cite{BS1} proved the classification that the Reeb flow on a real hypersurface in $G_2({\mathbb C}^{m+2})$ is isometric if and only if $M$ is an open part of a tube around a totally geodesic $ G_2({\mathbb C}^{m+1})\subset G_2({\mathbb C}^{m+2})$.  Moreover, in \cite{S} we have shown that the Reeb flow on a real hypersurface in $SU_{2,m}/S(U_2U_m)$ is isometric if and only if $M$ is an open part of a tube around a totally geodesic $ SU_{2,m-1}/S(U_2U_{m-1})\subset SU_{2,m}/S(U_2U_m)$ .  In view of the previous two results a natural expectation might be that the classification involves at least the totally geodesic $Q^{m-1} \subset Q^m$. But, suprisingly, in the complex quadric $Q^m$ the situation is quite different from those ones. Recently, Berndt and Suh \cite{BS2} proved the following result:
 \par
 \vskip 6pt
\begin{thm}\label{thm 1}
Let $M$ be a real hypersurface in the complex quadric $Q^m$, $m\geq 3$. The Reeb flow on $M$ is isometric if and only if $m$ is even, say $m = 2k$, and $M$ is an open part of a tube around a totally geodesic ${\mathbb C}P^k \subset Q^{2k}$.
\end{thm}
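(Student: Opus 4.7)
The plan is to treat the two implications separately. For the sufficiency direction, I would build the candidate tubes explicitly: because $\mathbb{C}P^k$ sits inside $Q^{2k}$ as a totally geodesic complex submanifold, Jacobi field theory along the normal geodesics of $\mathbb{C}P^k$ delivers the principal curvatures of a tube $M_r$ of small radius $r$ together with the corresponding principal distributions. The tangent space of $M_r$ decomposes into a sum consisting of $\mathbb{R}\xi$, where $\xi = -JN$, and two further pieces coming from $T\mathbb{C}P^k$ and its $J$-image, each with its own principal curvature. The structure tensor $\phi$ either preserves these pieces or interchanges them in equal-eigenvalue pairs, and a direct inspection then yields $S\phi=\phi S$, which is equivalent to the Reeb flow being isometric.

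For the necessity direction, assume $S\phi=\phi S$. Applying both sides to $\xi$ and using $\phi\xi = 0$ gives $S\xi = \alpha\xi$ for some function $\alpha$, so the Reeb vector field is principal. The next step is to exploit the rank-two structure of $Q^m$: besides the K\"ahler form $J$, the quadric carries a circle bundle $\mathfrak{A}$ of complex conjugations $A$ with $AJ=-JA$, and at each point the unit normal $N$ is either $\mathfrak{A}$-principal (some $A\in\mathfrak{A}$ satisfies $AN=N$) or $\mathfrak{A}$-isotropic (meaning $g(AN,N) = g(AN,JN) = 0$ for all $A\in\mathfrak{A}$). I would then insert the relation $S\phi=\phi S$ into the Codazzi equation for $M\subset Q^m$, whose curvature term carries contributions from both $J$ and $A$ arising from the explicit form of the curvature tensor of the quadric. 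Extracting the $\xi$-component and the tangential components of this identity produces polynomial relations among $\alpha$, the remaining principal curvatures, and the $\mathfrak{A}$-components of the test vectors.

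The main obstacle I anticipate is ruling out the $\mathfrak{A}$-principal case. There, Codazzi applied to vectors in the $\pm1$-eigenspaces of $A$ should yield incompatible eigenvalue equations, but getting a clean contradiction with $m\ge 3$ requires careful bookkeeping between $AX$, $JAX$ and the $\phi$-action on each summand. Once this case is excluded, in the $\mathfrak{A}$-isotropic case I would extract from the same Codazzi analysis a full description of the spectrum of $S$: besides $\alpha$ on $\mathbb{R}\xi$, two further eigenvalues appear on $\xi^{\perp}$ whose eigenspaces are interchanged by $\phi$ and therefore have equal real dimension, which forces $m$ to be even, $m=2k$. To recognise $M$ as a tube, the final step is to integrate a suitable $\phi$-invariant eigendistribution of $S$ and show that its focal leaves are totally geodesic complex submanifolds of complex dimension $k$ in $Q^{2k}$; by the classification of totally geodesic submanifolds of the quadric due to Klein \cite{K} and Reckziegel \cite{R} such a leaf must be a $\mathbb{C}P^k$, so $M$ is an open part of the tube around one such $\mathbb{C}P^k$.
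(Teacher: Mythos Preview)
This statement is not proved in the present paper. Theorem~1.1 is quoted as a known result of Berndt and Suh~\cite{BS2} and serves only as background motivation for the Main Theorem on the parallel structure Jacobi operator; the paper itself contains no argument for it, so there is no proof here against which to compare your proposal.

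Your outline is broadly the strategy of~\cite{BS2}, but one step in your necessity argument is not right as written. You say that on $\xi^{\perp}$ two further eigenvalues appear ``whose eigenspaces are interchanged by $\phi$ and therefore have equal real dimension, which forces $m$ to be even.'' Under the hypothesis $S\phi=\phi S$, the structure tensor $\phi$ \emph{preserves} each eigenspace of $S$ on $\xi^{\perp}$ rather than interchanging distinct ones: if $SX=\lambda X$ then $S\phi X=\phi SX=\lambda\phi X$. Hence every such eigenspace is automatically $\phi$-invariant and therefore of even real dimension, but since these even dimensions must only add up to $2m-2$, this alone puts no parity restriction on $m$. The constraint $m=2k$ in~\cite{BS2} comes instead from the finer $\mathfrak{A}$-isotropic structure together with the focal-set analysis: once $N$ is shown to be $\mathfrak{A}$-isotropic, the vectors $A\xi$ and $AN$ are tangent and span a distinguished $2$-plane with eigenvalue $0$, and the focal map along one of the remaining $\phi$-invariant eigendistributions produces a totally geodesic \emph{complex} submanifold whose complex dimension, compared via Klein's classification~\cite{K} with the list of totally geodesic complex submanifolds of $Q^m$, forces $m$ to be even and identifies the focal set as $\mathbb{C}P^k$. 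Your final identification step is correct in spirit, but the mechanism producing the parity of $m$ needs to be relocated accordingly.
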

\par
\vskip 6pt
On the other hand, Jacobi fields along geodesics of a given Riemannian manifold $(M,g)$ satisfy a well known differential equation. This equation naturally inspires the so-called Jacobi operator. That is, if $R$ denotes the curvature operator of $M$, and $X$ is tangent vector field to $M$, then the Jacobi operator $R_X{\in}End(T_xM)$ with respect to $X$ at $x{\in}M$, defined by $(R_XY)(x)=(R(Y,X)X)(x)$ for any $X{\in}T_xM$, becomes a self adjoint endomorphism of the tangent bundle $TM$ of $M$. Thus, each tangent vector field $X$ to $M$ provides a Jacobi operator $R_X$ with respect to $X$. In particular, for the Reeb vector field $\xi$, the Jacobi
operator $R_{\xi}$ is said to be a {\it structure Jacobi operator}.
\par
\vskip 6pt
Recently Ki, P\'erez, Santos and Suh \cite{KPSS} have investigated
      the Reeb parallel structure Jacobi operator in the complex space form $M_m(c)$, $c{\not =}0$ and have used it to study some principal curvatures for a tube over a totally geodesic submanifold. In particular, P\'erez, Jeong and Suh \cite{PJS} have investigated real hypersurfaces $M$ in $G_2({\mathbb C}^{m+2})$ with parallel structure Jacobi operator, that is, ${\nabla}_X{R}_{\xi}=0$ for any tangent vector field $X$ on $M$. Jeong, Suh and Woo \cite{JSW}  and  P\'erez and Santos \cite{PS} have generalized such a notion to the recurrent structure Jacobi operator, that is, $({\nabla}_X{R}_{\xi})Y={\beta}(X){R}_{\xi}Y$ for a certain $1$-form $\beta$ and any vector fields $X,Y$ on $M$ in $G_2({\mathbb C}^{m+2})$. Moreover, P\'erez, Santos and Suh \cite{PSS} have further investigated the property of the Lie $\xi$-parallel structure Jacobi operator in complex projective space ${\Bbb C}P^m$, that is, ${\mathcal L}_{\xi}R_{\xi}=0$.
\par
\vskip 6pt
When we consider a hypersurface $M$ in the complex quadric ${Q}^m$, the unit normal vector field $N$ of $M$ in $Q^m$ can be divided into two cases :  $N$ is $\frak A$-isotropic or $\frak A$-principal (See \cite{BS2}, \cite{BS3}, \cite{S1} and \cite{S2}). In the first case where $M$ has an $\frak A$-isotropic unit normal $N$, we have asserted in \cite{BS2} that $M$ is locally congruent to a tube over a totally geodesic ${\mathbb C}P^k$ in $Q^{2k}$.
In the second case when $N$ is $\frak A$-principal we have proved that $M$ is locally congruent to a tube over a totally geodesic and totally real submanifold $S^m$ in $Q^m$ (see \cite{BS3}).  In this paper we consider the case when the structure Jacobi operator $R_{\xi}$ of $M$ in $Q^m$ , that is, ${\nabla}_X{R}_{\xi}=0$ for any tangent vector field $X$ on $M$ is parallel, and we prove the following
\vskip 6pt
\begin{mT}\label{Main Theorem}\quad There do not exist any Hopf hypersurfaces in $Q^m$, $m{\ge}3$ with parallel stucture Jacobi operator.
\end{mT}
\par
\vskip 6pt

\medskip

\section{The complex quadric}\label{section 2}

For more background to this section we refer to \cite{BS2}, \cite{BS3}, \cite{K}, \cite{KO}, \cite{R}, \cite{S1} and \cite{S2} . The complex quadric $Q^m$ is the complex hypersurface in ${\mathbb C}P^{m+1}$ which is defined by the equation $z_1^2 + \cdots + z_{m+2}^2 = 0$, where $z_1,\ldots,z_{m+2}$ are homogeneous coordinates on ${\mathbb C}P^{m+1}$. We equip $Q^m$ with the Riemannian metric which is induced from the Fubini Study metric on ${\mathbb C}P^{m+1}$ with constant holomorphic sectional curvature $4$. The K\"{a}hler structure on ${\mathbb C}P^{m+1}$ induces canonically a K\"{a}hler structure $(J,g)$ on the complex quadric. For each $z \in Q^m$ we identify $T_z{\mathbb C}P^{m+1}$ with the orthogonal complement ${\mathbb C}^{m+2} \ominus {\mathbb C}z$ of ${\mathbb C}z$ in ${\mathbb C}^{m+2}$ (see Kobayashi and Nomizu \cite{KO}). The tangent space $T_zQ^m$ can then be identified canonically with the orthogonal complement ${\mathbb C}^{m+2} \ominus ({\mathbb C}z \oplus {\mathbb C}\rho)$ of ${\mathbb C}z \oplus {\mathbb C}\rho$ in ${\mathbb C}^{m+2}$, where $\rho \in \nu_zQ^m$ is a normal vector of $Q^m$ in ${\mathbb C}P^{m+1}$ at the point $z$.

The complex projective space ${\mathbb C}P^{m+1}$ is a Hermitian symmetric space of the special unitary group $SU_{m+2}$, namely ${\mathbb C}P^{m+1} = SU_{m+2}/S(U_{m+1}U_1)$. We denote by $o = [0,\ldots,0,1] \in {\mathbb C}P^{m+1}$ the fixed point of the action of the stabilizer $S(U_{m+1}U_1)$. The special orthogonal group $SO_{m+2} \subset SU_{m+2}$ acts on ${\mathbb C}P^{m+1}$ with cohomogeneity one. The orbit containing $o$ is a totally geodesic real projective space ${\mathbb R}P^{m+1} \subset {\mathbb C}P^{m+1}$. The second singular orbit of this action is the complex quadric $Q^m = SO_{m+2}/SO_mSO_2$. This homogeneous space model leads to the geometric interpretation of the complex quadric $Q^m$ as the Grassmann manifold $G_2^+({\mathbb R}^{m+2})$ of oriented $2$-planes in ${\mathbb R}^{m+2}$. It also gives a model of $Q^m$ as a Hermitian symmetric space of rank $2$. The complex quadric $Q^1$ is isometric to a sphere $S^2$ with constant curvature, and $Q^2$ is isometric to the Riemannian product of two $2$-spheres with constant curvature. For this reason we will assume $m \geq 3$ from now on.

For a unit normal vector $\rho$ of $Q^m$ at a point $z \in Q^m$ we denote by $A = A_\rho$ the shape operator of $Q^m$ in ${\mathbb C}P^{m+1}$ with respect to $\rho$. The shape operator is an involution on the tangent space $T_zQ^m$ and
$$T_zQ^m = V(A_\rho) \oplus JV(A_\rho),$$
where $V(A_\rho)$ is the $+1$-eigenspace and $JV(A_\rho)$ is the $(-1)$-eigenspace of $A_\rho$.  Geometrically this means that the shape operator $A_\rho$ defines a real structure on the complex vector space $T_zQ^m$, or equivalently, is a complex conjugation on $T_zQ^m$. Since the real codimension of $Q^m$ in ${\mathbb C}P^{m+1}$ is $2$, this induces an $S^1$-subbundle ${\mathfrak A}$ of the endomorphism bundle ${\rm End}(TQ^m)$ consisting of complex conjugations.

There is a geometric interpretation of these conjugations. The complex quadric $Q^m$ can be viewed as the complexification of the $m$-dimensional sphere $S^m$. Through each point $z \in Q^m$ there exists a one-parameter family of real forms of $Q^m$ which are isometric to the sphere $S^m$. These real forms are congruent to each other under action of the center $SO_2$ of the isotropy subgroup of $SO_{m+2}$ at $z$. The isometric reflection of $Q^m$ in such a real form $S^m$ is an isometry, and the differential at $z$ of such a reflection is a conjugation on $T_zQ^m$. In this way the family ${\mathfrak A}$ of conjugations on $T_zQ^m$ corresponds to the family of real forms $S^m$ of $Q^m$ containing $z$, and the subspaces $V(A) \subset T_zQ^m$ correspond to the tangent spaces $T_zS^m$ of the real forms $S^m$ of $Q^m$.

The Gauss equation for $Q^m \subset {\mathbb C}P^{m+1}$ implies that the Riemannian curvature tensor $\bar R$ of $Q^m$ can be described in terms of the complex structure $J$ and the complex conjugations $A \in {\mathfrak A}$:
\begin{eqnarray*}
{\bar R}(X,Y)Z & = & g(Y,Z)X - g(X,Z)Y + g(JY,Z)JX - g(JX,Z)JY - 2g(JX,Y)JZ \\
 & & + g(AY,Z)AX - g(AX,Z)AY + g(JAY,Z)JAX - g(JAX,Z)JAY.
\end{eqnarray*}

Then from the equation of Gauss the curvature tensor $R$ of $M$ in complex quadric $Q^m$ is defined so that
\begin{eqnarray*}
R(X,Y)Z & = & g(Y,Z)X - g(X,Z)Y + g({\phi}Y,Z){\phi}X - g({\phi}X,Z){\phi}Y - 2g({\phi}X,Y){\phi}Z \\
 & & + g(AY,Z)AX - g(AX,Z)AY + g(JAY,Z)JAX - g(JAX,Z)JAY\\
 & & + g(SY,Z)SX-g(SX,Z)SY,
\end{eqnarray*}
where $S$ denotes the shape operator of $M$ in $Q^m$.
\par
\vskip 6pt

Recall that a nonzero tangent vector $W \in T_zQ^m$ is called singular if it is tangent to more than one maximal flat in $Q^m$. There are two types of singular tangent vectors for the complex quadric $Q^m$:
\begin{itemize}
\item[1.] If there exists a conjugation $A \in {\mathfrak A}$ such that $W \in V(A)$, then $W$ is singular. Such a singular tangent vector is called ${\mathfrak A}$-principal.
\item[2.] If there exist a conjugation $A \in {\mathfrak A}$ and orthonormal vectors $X,Y \in V(A)$ such that $W/||W|| = (X+JY)/\sqrt{2}$, then $W$ is singular. Such a singular tangent vector is called ${\mathfrak A}$-isotropic.
\end{itemize}
For every unit tangent vector $W \in T_zQ^m$ there exist a conjugation $A \in {\mathfrak A}$ and orthonormal vectors $X,Y \in V(A)$ such that
\[
W = \cos(t)X + \sin(t)JY
\]
for some $t \in [0,\pi/4]$. The singular tangent vectors correspond to the values $t = 0$ and $t = \pi/4$. If $0 < t < \pi/4$ then the unique maximal flat containing $W$ is ${\mathbb R}X \oplus {\mathbb R}JY$.
Later we will need the eigenvalues and eigenspaces of the Jacobi operator $R_W = R(\cdot,W)W$ for a singular unit tangent vector $W$.
\begin{itemize}
\item[1.] If $W$ is an ${\mathfrak A}$-principal singular unit tangent vector with respect to $A \in {\mathfrak A}$, then the eigenvalues of $R_W$ are $0$ and $2$ and the corresponding eigenspaces are ${\mathbb R}W \oplus J(V(A) \ominus {\mathbb R}W)$ and $(V(A) \ominus {\mathbb R}W) \oplus {\mathbb R}JW$, respectively.
\item[2.] If $W$ is an ${\mathfrak A}$-isotropic singular unit tangent vector with respect to $A \in {\mathfrak A}$ and $X,Y \in V(A)$, then the eigenvalues of $R_W$ are $0$, $1$ and $4$ and the corresponding eigenspaces are ${\mathbb R}W \oplus {\mathbb C}(JX+Y)$, $T_zQ^m \ominus ({\mathbb C}X \oplus {\mathbb C}Y)$ and ${\mathbb R}JW$, respectively.
\end{itemize}

\section{Some general equations}\label{section 3}

Let $M$ be a  real hypersurface in $Q^m$ and denote by $(\phi,\xi,\eta,g)$ the induced almost contact metric structure. Note that $\xi = -JN$, where $N$ is a (local) unit normal vector field of $M$. The tangent bundle $TM$ of $M$ splits orthogonally into  $TM = {\mathcal C} \oplus {\mathbb R}\xi$, where ${\mathcal C} = {\rm ker}(\eta)$ is the maximal complex subbundle of $TM$. The structure tensor field $\phi$ restricted to ${\mathcal C}$ coincides with the complex structure $J$ restricted to ${\mathcal C}$, and $\phi \xi = 0$.
\par
\vskip 6pt
At each point $z \in M$ we define the maximal ${\mathfrak A}$-invariant subspace of $T_zM$, $z{\in}M$ as follows:

\[
{\mathcal Q}_z = \{X \in T_zM \mid AX \in T_zM\ {\rm for\ all}\ A \in {\mathfrak A}_z\}.
\]

\begin{lm}(see \cite{S1})\label{lemma 3.1}
For each $z \in M$ we have
\begin{itemize}
\item[(i)] If $N_z$ is ${\mathfrak A}$-principal, then ${\mathcal Q}_z = {\mathcal C}_z$.
\item[(ii)] IF $N_z$ is not ${\mathfrak A}$-principal, there exist a conjugation $A \in {\mathfrak A}$ and orthonormal vectors $X,Y \in V(A)$ such that $N_z = \cos(t)X + \sin(t)JY$ for some $t \in (0,\pi/4]$.
Then we have ${\mathcal Q}_z = {\mathcal C}_z \ominus {\mathbb C}(JX + Y)$.
\end{itemize}
\end{lm}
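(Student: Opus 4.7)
The plan is to parametrize the $S^1$-bundle $\mathfrak{A}_z$ concretely and reduce the defining condition of $\mathcal{Q}_z$ from an infinite family of linear constraints to finitely many. Fix a conjugation $A \in \mathfrak{A}_z$. Since multiplication by $\cos s + \sin s \cdot J$ preserves the property of being a $g$-symmetric involution that anti-commutes with $J$ (a short check using $A^2 = I$ and $AJ = -JA$), every element of $\mathfrak{A}_z$ has the form $A_s = \cos(s)\, A + \sin(s)\, JA$. Exploiting the symmetry of $A_s$, the membership $A_s Z \in T_zM$ becomes $g(Z, A_s N) = 0$, and expanding in $\cos s$, $\sin s$ shows this is equivalent to the pair $g(Z, AN) = 0$ and $g(Z, JAN) = 0$. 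Together with $g(Z, N) = 0$, these three orthogonality relations characterize $\mathcal{Q}_z$ inside $T_zQ^m$.

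For part (i), suppose $N$ is $\mathfrak{A}$-principal and choose $A$ with $AN = N$. Then $JAN = JN = -\xi$, and the three conditions collapse to $g(Z, N) = 0$ and $g(Z, \xi) = 0$, which are precisely the defining equations of $\mathcal{C}_z$ inside $T_zQ^m$. This gives $\mathcal{Q}_z = \mathcal{C}_z$. For part (ii), insert $N = \cos(t) X + \sin(t) JY$ with $X, Y \in V(A)$ orthonormal. Using $AX = X$, $AY = Y$, and $AJ = -JA$, compute
\[
AN = \cos(t) X - \sin(t) JY, \qquad JAN = \cos(t) JX + \sin(t) Y.
\]
A direct calculation exploiting $V(A) \perp JV(A)$ and the skew-symmetry of $J$ with respect to $g$ shows that the complex line $\mathbb{C}(JX + Y)$ sits inside $\mathcal{C}_z$. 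One then checks that the residual condition $g(Z, JAN) = 0$, when restricted to $Z \in \mathcal{C}_z$, is equivalent to $Z \perp \mathbb{C}(JX+Y)$, so $\mathcal{Q}_z = \mathcal{C}_z \ominus \mathbb{C}(JX+Y)$.

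The step I expect to be the most delicate is the final identification in part (ii). It requires rewriting $JAN$ as a multiple of $JX + Y$ plus a vector already annihilated by every $Z \in \mathcal{C}_z$, so that the single scalar equation $g(Z, JAN) = 0$ upgrades inside $\mathcal{C}_z$ to $\mathbb{C}$-linear orthogonality against the whole complex line $\mathbb{C}(JX+Y)$; the companion direction $J(JX+Y) = -X + JY$ enters through the $J$-invariance of the $\mathcal{C}_z$-conditions. The trigonometric bookkeeping in $t$ has to be done with care, and a dimension count at the end, together with the inclusion $\mathcal{Q}_z \subset \mathcal{C}_z$ extracted from the reduction, closes the identification.
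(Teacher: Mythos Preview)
The paper does not prove this lemma here; it is quoted from \cite{S1}. Your reduction of the displayed formula for $\mathcal Q_z$ to the pair of conditions $g(Z,AN)=0$, $g(Z,JAN)=0$ (for $Z\in T_zM$) is correct, and part (i) goes through cleanly since then $AN=N$ and $JAN=-\xi$.

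Part (ii), however, has a genuine gap: your claimed inclusion $\mathcal Q_z\subset\mathcal C_z$ does \emph{not} follow from the three conditions $Z\perp N,\ AN,\ JAN$. For $t\in(0,\pi/4]$ the vector $Z=\sin(t)\,JX-\cos(t)\,Y$ is orthogonal to each of $N$, $AN$ and $JAN$, yet $g(Z,\xi)=-\sin(2t)\neq0$. Thus the set cut out by the displayed formula has real codimension $3$ in $T_zQ^m$, while $\mathcal C_z\ominus\mathbb C(JX+Y)$ has codimension $4$; no dimension count can close this.

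What is missing is that the text declares $\mathcal Q_z$ to be the \emph{maximal $\mathfrak A$-invariant} subspace of $T_zM$, and $\mathfrak A$-invariance is stronger than ``$\mathfrak A$ maps into $T_zM$''. From $A_0^2=I$ and $A_0J=-JA_0$ one gets $A_sA_0=\cos(s)\,I+\sin(s)\,J$, so any $\mathfrak A$-invariant subspace is automatically $J$-invariant and hence contained in $\mathcal C_z$. With this extra condition $Z\perp\xi$ in hand, the four relations $Z\perp N,\ \xi,\ AN,\ JAN$ force (for $t\neq0$) $Z\perp\mathbb CX\oplus\mathbb CY$, and the identification with $\mathcal C_z\ominus\mathbb C(JX+Y)$ follows. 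A side remark: for $t\neq\pi/4$ one has $g(JX+Y,\xi)=\sin t-\cos t\neq0$, so $JX+Y\notin\mathcal C_z$; your assertion that $\mathbb C(JX+Y)\subset\mathcal C_z$ only holds in the $\mathfrak A$-isotropic case, and for general $t$ the symbol ``$\ominus$'' should be read as orthogonal complement inside $T_zQ^m$ rather than as an internal orthogonal splitting of $\mathcal C_z$.
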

\par
\vskip 6pt
We now assume that $M$ is a Hopf hypersurface. Then we have
\[
S\xi = \alpha \xi
\]
with the smooth function $\alpha = g(S\xi,\xi)$ on $M$.
When we consider a transform $JX$ of the Kaehler structure $J$ on $Q^m$ for any vector field $X$ on $M$ in $Q^m$, we may put
$$JX={\phi}X+{\eta}(X)N$$
for a unit normal $N$ to $M$. We now consider the Codazzi equation
\begin{equation*}
\begin{split}
g((\nabla_XS)Y - (\nabla_YS)X,Z) & =  \eta(X)g(\phi Y,Z) - \eta(Y) g(\phi X,Z) - 2\eta(Z) g(\phi X,Y) \\
& \quad \ \   + g(X,AN)g(AY,Z) - g(Y,AN)g(AX,Z)\\
& \quad \ \   + g(X,A\xi)g(J AY,Z) - g(Y,A\xi)g(JAX,Z).
\end{split}
\end{equation*}
Putting $Z = \xi$ we get
\begin{equation*}
\begin{split}
g((\nabla_XS)Y - (\nabla_YS)X,\xi) & =   - 2 g(\phi X,Y) \\
& \quad \ \  + g(X,AN)g(Y,A\xi) - g(Y,AN)g(X,A\xi)\\
& \quad \ \  - g(X,A\xi)g(JY,A\xi) + g(Y,A\xi)g(JX,A\xi).
\end{split}
\end{equation*}
On the other hand, we have
\begin{eqnarray*}
 & & g((\nabla_XS)Y - (\nabla_YS)X,\xi) \\
& = & g((\nabla_XS)\xi,Y) - g((\nabla_YS)\xi,X) \\
& = & (X\alpha)\eta(Y) - (Y\alpha)\eta(X) + \alpha g((S\phi + \phi
S)X,Y) - 2g(S \phi SX,Y).
\end{eqnarray*}
Comparing the previous two equations and putting $X = \xi$ yields
$$
Y\alpha  =  (\xi \alpha)\eta(Y)  - 2g(\xi,AN)g(Y,A\xi) +
2g(Y,AN)g(\xi,A\xi).
$$
Reinserting this into the previous equation yields
\begin{eqnarray*}
 & & g((\nabla_XS)Y - (\nabla_YS)X,\xi) \\
& = &  - 2g(\xi,AN)g(X,A\xi)\eta(Y) + 2g(X,AN)g(\xi,A\xi)\eta(Y) \\
& &  + 2g(\xi,AN)g(Y,A\xi)\eta(X) - 2g(Y,AN)g(\xi,A\xi)\eta(X) \\
& & + \alpha g((\phi S + S\phi)X,Y) - 2g(S \phi SX,Y) .
\end{eqnarray*}
Altogether this implies
\begin{eqnarray*}
0 & = & 2g(S \phi SX,Y) - \alpha g((\phi S + S\phi)X,Y) - 2 g(\phi X,Y) \\
& & + g(X,AN)g(Y,A\xi) - g(Y,AN)g(X,A\xi)\\
& & - g(X,A\xi)g(JY,A\xi) + g(Y,A\xi)g(JX,A\xi)\\
& & + 2g(\xi,AN)g(X,A\xi)\eta(Y) - 2g(X,AN)g(\xi,A\xi)\eta(Y) \\
& &  - 2g(\xi,AN)g(Y,A\xi)\eta(X) + 2g(Y,AN)g(\xi,A\xi)\eta(X).
\end{eqnarray*}
At each point $z \in M$ we can choose $A \in {\mathfrak A}_z$ such that
\[ N = \cos(t)Z_1 + \sin(t)JZ_2 \]
for some orthonormal vectors $Z_1,Z_2 \in V(A)$ and $0 \leq t \leq \frac{\pi}{4}$ (see Proposition 3 in \cite{R}). Note that $t$ is a function on $M$.
First of all, since $\xi = -JN$, we have
\begin{eqnarray*}
N & = & \cos(t)Z_1 + \sin(t)JZ_2, \\
AN & = & \cos(t)Z_1 - \sin(t)JZ_2, \\
\xi & = & \sin(t)Z_2 - \cos(t)JZ_1, \\
A\xi & = & \sin(t)Z_2 + \cos(t)JZ_1.
\end{eqnarray*}
This implies $g(\xi,AN) = 0$ and hence
\begin{eqnarray*}
0 & = & 2g(S \phi SX,Y) - \alpha g((\phi S + S\phi)X,Y) - 2 g(\phi X,Y) \\
& & + g(X,AN)g(Y,A\xi) - g(Y,AN)g(X,A\xi)\\
& & - g(X,A\xi)g(JY,A\xi) + g(Y,A\xi)g(JX,A\xi)\\
& &  - 2g(X,AN)g(\xi,A\xi)\eta(Y) + 2g(Y,AN)g(\xi,A\xi)\eta(X).
\end{eqnarray*}
The curvature tensor $R(X,Y)Z$ for a real hypersurface $M$ in $Q^m$ is given by
\begin{eqnarray*}
R(X,Y)Z&=&g(Y,Z)X-g(X,Z)Y+g({\phi}Y,Z){\phi}X\\
& &-g({\phi}X,Z){\phi}Y-2g({\phi}X,Y){\phi}Z\\
& &+g(AY,Z)AX-g(AX,Z)AY+g(JAY,Z)JAX\\
& &-g(JAX,Z)JAY+g(SY,Z)SX-g(SX,Z)SY.
\end{eqnarray*}
From this, putting $Y=Z={\xi}$ and using $g(A{\xi},N)=0$, a structure Jacobi operator is defined by
\begin{eqnarray*}
R_{\xi}(X)&=&R(X,{\xi}){\xi}\\
&=& X-{\eta}(X){\xi}+g(A{\xi},{\xi})AX-g(AX,{\xi})A{\xi}\\
& & -g(JAX,{\xi})JA{\xi}+g(S{\xi},{\xi})SX-g(SX,{\xi})S{\xi}.
\end{eqnarray*}
\par
\vskip 6pt
\section{A key lemma}\label{section 4}
\par
\vskip 6pt
The curvature tensor $R(X,Y)Z$ for a Hopf real hypersurface $M$ in $Q^m$ induced from the curvature tensor
of $Q^m$ is given in section 3. Now the structure Jacobi operator $R_{\xi}$ from section 3 can be rewritten as follows:

\begin{equation}\label{e41}
\begin{split}
R_{\xi}(X)=&R(X,{\xi}){\xi}\\
=&X-{\eta}(X){\xi}+{\beta}AX-g(AX,{\xi})A{\xi}-g(AX,N)AN\\
&+{\alpha}SX-g(SX,{\xi})S{\xi},
\end{split}
\end{equation}
where we have put ${\alpha}=g(S{\xi},{\xi})$ and ${\beta}=g(A{\xi},{\xi})$, because we assume that $M$ is Hopf.  The Reeb vector field ${\xi}=-JN$ and the anti-commuting property $AJ=-JA$ gives that the function ${\beta}$ becomes ${\beta}=-g(AN,N)$. When this function ${\beta}=g(A{\xi},{\xi})$ identically vanishes, we say that a real hypersurface $M$ in $Q^m$ is $\frak A$-isotropic as in section 2.

Here we use the assumption of parallel structure Jacobi operator. Then (\ref{e41}) gives that
\begin{equation}\label{e42}
\begin{split}
0=&{\nabla}_{Y}R_{\xi}(X)={\nabla}_Y(R_{\xi}(X))-R_{\xi}({\nabla}_YX)\\
=&-({\nabla}_Y{\eta})(X){\xi}-{\eta}(X){\nabla}_Y{\xi}+(Y{\beta})AX\\
&+{\beta}\{{\bar{\nabla}}_Y(AX)-A{\nabla}_YX\}-g(X,{\bar{\nabla}}_Y(A{\xi}))A{\xi}\\
&-g(X,A{\xi}){\bar{\nabla}}_Y(A{\xi})-g(X,{\bar{\nabla}}_Y(AN))AN-g(X,AN){\bar{\nabla}}_Y(AN)\\
&+(Y{\alpha})SX+{\alpha}({\nabla}_YS)X-Y({\alpha}^2){\eta}(X){\xi}-{\alpha}^2({\nabla}_Y{\eta})(X){\xi}-{\alpha}^2{\eta}(X){\nabla}_Y{\xi}\\
=&-g({\phi}SY,X){\xi}-{\eta}(X){\phi}SY+(Y{\beta})AX\\
&+{\beta}\{q(Y)AX+g(SX,Y)AN\}-g(X,q(Y)A{\xi}+A{\phi}SY+{\alpha}{\eta}(Y)AN)A{\xi}\\
&-g(X,A{\xi})\{q(Y)A{\xi}+A{\phi}SY+{\alpha}{\eta}(Y)AN\}-g(X,q(Y)AN-ASY)AN\\
&-g(X,AN)\{q(Y)AN-ASY\}+(Y{\alpha})SX+{\alpha}({\nabla}_YS)X-Y({\alpha}^2){\eta}(X){\xi}\\
&-{\alpha}^2({\nabla}_Y{\eta})(X){\xi}-{\alpha}^2{\eta}(X){\nabla}_Y{\xi},
\end{split}
\end{equation}
where we have used the following formulae
\begin{equation*}
\begin{split}
{\bar{\nabla}}_Y(A{\xi})=&({\bar{\nabla}}_YA){\xi}+A({\bar{\nabla}}{\xi})\\
=&q(Y)A{\xi}+A{\phi}SY+g(SY,{\xi})AN,
\end{split}
\end{equation*}

\begin{equation*}
{\bar{\nabla}}_Y(AN)=({\bar\nabla}_YA)N+A{\bar{\nabla}}N=q(Y)AN-ASY,
\end{equation*}

and
\begin{equation*}
{\bar{\nabla}}_Y(AX)=({\bar\nabla}_YA)X+A{\bar{\nabla}}_YX=q(Y)AX+A({\nabla}_YX+{\sigma}(X,Y)).
\end{equation*}

From this, by taking inner product with the unit normal $N$, we have
\begin{equation*}
\begin{split}
0=&(Y{\beta})g(AX,N)+{\beta}\{q(Y)g(AX,N)+g(SX,Y)g(AN,N)\}\\
&-g(X,A{\xi})\{g(A{\phi}SY,N)+{\alpha}{\eta}(Y)g(AN,N)\}-g(X,q(Y)AN-ASY)g(AN,N)\\
&-q(Y)g(AN,N)g(X,AN)+g(X,AN)g(ASY,N)
\end{split}
\end{equation*}
Then by putting $X={\xi}$ and using $g(A{\xi},N)=0$, we have
\begin{equation}\label{e43}
{\beta}g(A{\phi}SY, N)+{\beta}g({\xi},ASY)=0.
\end{equation}

On the other hand, we know that
\begin{equation*}
\begin{split}
g({\xi},ASY)=&-g(JN,ASY)=g(N,JASY)=-g(N,AJSY)\\
=&-g(N,A{\phi}SY)-{\eta}(SY)g(N,AN).
\end{split}
\end{equation*}
Substituting this one into (\ref{e43}), we have
$${\beta}\{g(A{\phi}SY,N)+{\eta}(SY)g(N,AN)\}-{\beta}g(A{\phi}SY,N)=0.$$
Putting $Y={\xi}$ gives ${\alpha}{\beta}g(N,AN)=-{\alpha}{\beta}^2=0$, which implies $t=\frac{\pi}{4}$, if the Reeb function
$\alpha$ is non-vanishing. Then the unit normal vector field $N$ becomes
$$N=\frac{1}{\sqrt 2}(Z_1+JZ_2)$$
for $Z_1,Z_2{\in}V(A)$ as in section 3, that is, the unit normal $N$ is $\frak A$-isotropic .  Now we can prove the following
\par
\vskip 6pt
\begin{lm}
Let $M$ be a Hopf real hypersurface in complex quadric $Q^m$, $m{\ge}3$, with parallel structure Jacobi operator. Then the unit normal vector field $N$ is $\frak A$-principal or
$\frak A$-isotropic.
\end{lm}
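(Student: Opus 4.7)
The overall strategy is to extract from $\nabla R_\xi = 0$ a single pointwise identity $\alpha\beta^2 = 0$, where $\alpha = g(S\xi,\xi)$ is the Reeb function and $\beta = g(A\xi,\xi) = -\cos(2t)$ measures how far $N$ is from being $\mathfrak{A}$-principal, and then to combine this identity with the gradient formula for $\alpha$ derived in section~3 to force $N$ into one of the two singular regimes at every point.

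For the scalar identity I would differentiate the explicit expression~(\ref{e41}) for $R_\xi(X)$ along $Y$ using the three conjugation identities stated after~(\ref{e42}). Projecting the tangential identity $\nabla_Y R_\xi(X)=0$ onto $N$ kills the tangential derivative pieces, and the specialization $X=\xi$ together with $g(A\xi,N)=0$ collapses what remains to $\beta(g(A\phi SY,N)+g(\xi,ASY))=0$. The substitution $g(\xi,ASY)=-g(N,A\phi SY)-\eta(SY)g(N,AN)$ coming from $\xi=-JN$ and $AJ=-JA$ cancels the mixed term, and using $g(AN,N)=-\beta$ one is left with $\beta^{2}\eta(SY)=0$ for every $Y$; setting $Y=\xi$ yields $\alpha\beta^{2}=0$. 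Hence on the open set $\{\alpha\neq 0\}$ one has $\beta=0$, i.e.\ $\cos(2t)=0$, so $t=\pi/4$ and $N$ is $\mathfrak{A}$-isotropic there.

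The main obstacle is the case $\alpha\equiv 0$, where the identity $\alpha\beta^{2}=0$ carries no information. My plan here is to pass to the interior of $\{\alpha=0\}$, on which $\nabla\alpha=0$, and invoke the identity $Y\alpha = (\xi\alpha)\eta(Y) - 2g(\xi,AN)g(Y,A\xi) + 2g(Y,AN)g(\xi,A\xi)$ established in section~3. With $\alpha=0$ and $g(\xi,AN)=0$, this reduces to $\beta\,g(Y,AN)=0$ for every $Y$. Pointwise this dichotomizes: either $\beta=0$ and $N$ is $\mathfrak{A}$-isotropic, or the tangential component of $AN$ vanishes so that $AN\in\mathbb{R}N$; the involution property $A^{2}=I$ then forces $\beta^{2}=1$, hence $t=0$ and $N$ is $\mathfrak{A}$-principal. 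On the boundary points of $\{\alpha\neq 0\}$ lying in $\{\alpha=0\}$, continuity of $\beta$ from the open region where it vanishes gives $\beta=0$ there as well, so $\mathfrak{A}$-isotropy persists and the pointwise dichotomy claimed in the lemma follows.
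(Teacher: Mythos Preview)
Your proposal is correct and follows essentially the same route as the paper: the paper also obtains $\alpha\beta^{2}=0$ by projecting the differentiated expression~(\ref{e42}) onto $N$, setting $X=\xi$, and applying the same $AJ=-JA$ substitution, then handles the $\alpha\equiv 0$ case via the identical gradient formula $Y\alpha=(\xi\alpha)\eta(Y)-2g(\xi,AN)g(Y,A\xi)+2g(Y,AN)g(\xi,A\xi)$ to reach the same dichotomy $\beta=0$ or $AN\in\mathbb{R}N$. Your added continuity remark for boundary points of $\{\alpha\neq 0\}$ is a small refinement the paper omits, but otherwise the arguments coincide.
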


\begin{proof}
\par
When the Reeb function $\alpha$ is non-vanishing, the unit normal $N$ is $\frak A$-isotropic as mentioned above.  When the Reeb function $\alpha$ identically vanishes, let us show that
$N$ is $\frak A$-isotropic or $\frak A$-principal.
In order to do this, from the condition of Hopf, we can differentiate $S{\xi}={\alpha}{\xi}$ and use the equation of Codazzi in section 3, then we get the formula
$$
Y\alpha  =  (\xi \alpha)\eta(Y)  - 2g(\xi,AN)g(Y,A\xi) +
2g(Y,AN)g(\xi,A\xi).
$$
From the assumption of ${\alpha}=0$ combined with the fact $g({\xi},AN)=0$ proved in section 3, we deduce $g(Y,AN)g({\xi},A{\xi})=0$ for any $Y{\in}T_xM$, $x{\in}M$.
This gives that the vector $AN$ is normal, that is, $AN=g(AN,N)N$ or $g(A{\xi},{\xi})=0$, which implies that the unit normal $N$ is $\frak A$-principal or $\frak A$-isotropic, respectively.
This completes the proof of our Lemma.
\end{proof}

By virtue of this Lemma, we distinguish between two classes of real hypersurfaces in complex quadric $Q^n$ with parallel structure Jacobi operator : those that have $\frak A$-principal unit normal, and those that have
$\frak A$-isotropic unit normal vector field $N$. We treat the respective cases in sections $5$ and $6$.

\section{Parallel structure Jacobi operator with $\frak A$-principal normal}\label{section 4}
In this section we consider a real hypersurface $M$ in a complex quadric with $\frak A$-principal unit normal vector field. Then the unit normal vector field $N$ satisfies $AN=N$
for a complex conjuagation $A{\in}{\frak A}$.

Then the structure Jacobi operator $R_{\xi}$ is given by
\begin{equation}\label{e51}
{R}_{\xi}(X)=X-2{\eta}(X){\xi}-AX+g(S{\xi},{\xi})SX-g(SX,{\xi})S{\xi}.
\end{equation}
Since we assume that $M$ is Hopf, (\ref{e51}) becomes
\begin{equation}\label{e52}
{R}_{\xi}(X)=X-2{\eta}(X){\xi}-AX+{\alpha}SX-{\alpha}^2{\eta}(X){\xi}.
\end{equation}
\par
\vskip 6pt
By the assumption that the structure Jacobi operator $R_{\xi}$ is parallel, the derivative of $R_{\xi}$ along any tangent vector field $Y$ on $M$ is given by
\begin{equation}\label{e53}
\begin{split}
0=&({\nabla}_YR_{\xi})(X)={\nabla}_Y(R_{\xi}(X))-R_{\xi}({\nabla}_YX)\\
=&-2\{({\nabla}_Y{\eta})(X){\xi}+{\eta}(X){\nabla}_Y{\xi}\}-({\nabla}_YA)X+(Y{\alpha})SX\\
&+{\alpha}({\nabla}_YS)X-(Y{\alpha}^2){\eta}(X){\xi}-{\alpha}^2({\nabla}_Y{\eta})(X){\xi}-{\alpha}^2{\eta}(X){\nabla}_Y{\xi}.
\end{split}
\end{equation}
\par

We can write
$$AY=BY+{\rho}(Y)N,$$
where $BY$ denotes the tangential component of $AY$ and ${\rho}(Y)=g(AY,N)=g(Y,AN)=g(Y,N)=0$. So we have $AY=BY$ for any vector field $Y$ on $M$ in $Q^m$.
Then it follows

\begin{equation}\label{e54}
\begin{split}
({\nabla}_YA)X=&({\bar{\nabla}}_YA)X+A\{{\nabla}_YX+{\sigma}(Y,X)\}\\
&-{\sigma}(Y,AX)-A{\nabla}_YX\\
=&q(Y)AX+A{\sigma}(Y,X)-{\sigma}(Y,AX)\\
=&q(Y)AX+g(SX,Y)AN-g(SY,AX)N,
\end{split}
\end{equation}
where we have used the Gauss and Weingarten formulae. From this, together with (\ref{e53}) and using that $\frak A$-principal, we have
\begin{equation}\label{e55}
\begin{split}
0=&({\nabla}_YR_{\xi})(X)\\
=&-(2+{\alpha}^2)\{({\nabla}_Y{\eta})(X){\xi}+{\eta}(X){\nabla}_Y{\xi}\}\\
&-\{q(Y)AX+g(SX,Y)N-g(SY,AX)N\}\\
&+(Y{\alpha})SX+{\alpha}({\nabla}_YS)X-(Y{\alpha}^2){\eta}(X){\xi}.
\end{split}
\end{equation}
From this, taking inner product of (\ref{e55}) with the unit normal vector field $N$, we have
\begin{equation*}
SAX=SX
\end{equation*}
for any vector field $X$ on $M$. By putting $X=-{\xi}$ and using that $M$ is Hopf,  we have
$$
SA{\xi}=S{\xi}={\alpha}{\xi}.$$
But the left side becomes $SA{\xi}=-S{\xi}=-{\alpha}{\xi}$. So it follows that the Reeb function $\alpha$ vanishes.
Then (\ref{e55}) can be rearranged as follows:
\begin{equation}\label{e56}
\begin{split}
0=&-2\{({\nabla}_Y{\eta})(X)+{\eta}(X){\nabla}_Y{\xi}\}-q(Y)AX\\
=&-2\{g({\phi}SY,X){\xi}+{\eta}(X){\phi}SY\}-q(Y)AX.
\end{split}
\end{equation}
From this, by putting $X={\xi}$, it gives
$$-2{\phi}SY=q(Y)A{\xi}=-q(Y){\xi}.$$
So $q(Y)=0$ implies ${\phi}SY=0$, which gives $SY={\alpha}{\eta}(Y){\xi}$, that is, $M$ is totally $\eta$-umbilical, in which case the shape operator commutes with
the structure tensor $\phi$. Then by a theorem due to Berndt and Suh \cite{BS2}, $M$ is locally congruent to a tube of radius $r$, $0<r<\frac{\pi}{2}$, over a totally geodesic
complex submanifold ${\Bbb C}P^k$ in $Q^{2k}$, $m=2k$.  Morever, in this case the uinit normal $N$ is $\frak A$-isotropic, not $\frak A$-principal.
This rules out the existence of an $\frak A$-principal unit normal $N$ together with a parallel structure Jacobi operator. So we give a proof of our main theorem with $\frak A$-principal unit normal $N$.

\vskip 6pt
\section{Parallel structure Jacobi operator with $\frak A$-isotropic normal}\label{section 6}
\par
\vskip 6pt
In this section we assume that the unit normal vector field $N$ is $\frak A$-isotropic.  Then the normal vector field $N$ can be written as
$$N=\frac{1}{\sqrt 2}(Z_1+JZ_2)$$
for $Z_1,Z_2{\in}V(A)$, where $V(A)$ denotes a $+1$-eigenspace of the complex conjugation $A{\in}{\frak A}$. Then it follows that
$$AN=\frac{1}{\sqrt 2}(Z_1-JZ_2),\ AJN=-\frac{1}{\sqrt 2}(JZ_1+Z_2),\  \text{and}\  JN=\frac{1}{\sqrt 2}(JZ_1-Z_2).$$
Then it gives that
$$g({\xi},A{\xi})=g(JN,AJN)=0, g({\xi},AN)=0\ \text{and}\ g(AN,N)=0.$$
By virtue of these formulas for $\frak A$-isotropic unit normal, the structure Jacobi operator can be defined so that
\begin{equation}\label{e61}
\begin{split}
{R}_{\xi}(X)=&R(X,{\xi}){\xi}\\
=&X-{\eta}(X){\xi}-g(AX,{\xi})A{\xi}-g(JAX,{\xi})JA{\xi}\\
&+g(S{\xi},{\xi})SX-g(SX,{\xi})S{\xi}.
\end{split}
\end{equation}
\par
\vskip 6pt
On the other hand, we know that $JA{\xi}=-JAJN=AJ^2N=-JN$, and $g(JAX,{\xi})=-g(AX,J{\xi})=-g(AX,N)$. Now the structure Jacobi operator $R_{\xi}$ can be rearranged as follows:
\begin{equation}\label{e62}
\begin{split}
{R}_{\xi}(X)=&X-{\eta}(X){\xi}-g(AX,{\xi})A{\xi}-g(X,AN)AN\\
&+{\alpha}SX-{\alpha}^2{\eta}(X){\xi}.
\end{split}
\end{equation}
Differentiating (\ref{e62}) we obtain
\begin{equation}\label{e63}
\begin{split}
{\nabla}_YR_{\xi}(X)=&{\nabla}_Y(R_{\xi}(X))-R_{\xi}({\nabla}_YX)\\
=&-({\nabla}_Y{\eta})(X){\xi}-{\eta}(X){\nabla}_Y{\xi}-g(X,{\nabla}_Y(A{\xi}))A{\xi}\\
&-g(X,A{\xi}){\nabla}_Y(A{\xi})-g(X,{\nabla}_Y(AN))AN-g(X,AN){\nabla}_Y(AN)\\
&+(Y{\alpha})SX+{\alpha}({\nabla}_YS)X-(Y{\alpha}^2){\eta}(X){\xi}\\
&-{\alpha}^2({\nabla}_Y{\eta})(X){\xi}-{\alpha}^2{\eta}(X){\nabla}_Y{\xi}.
\end{split}
\end{equation}
Here let us use the equation of Gauss and Weingarten formula as follows:
\begin{equation*}
\begin{split}
{\nabla}_Y(AN)=&{\bar{\nabla}}_Y(AN)-{\sigma}(Y,AN)\\
=&({\bar\nabla}_YA)N+A{\bar\nabla}_YN-{\sigma}(Y,AN)\\
=&q(Y)JAN-ASY-{\sigma}(Y,AN),\\
=&q(Y)A{\xi}-ASY-{\sigma}(Y,AN),
\end{split}
\end{equation*}
and
\begin{equation*}
\begin{split}
{\nabla}_Y(A{\xi})=&{\bar{\nabla}}_Y(A{\xi})-{\sigma}(Y,A{\xi})\\
=&({\bar\nabla}_YA){\xi}+A{\bar\nabla}_Y{\xi}-{\sigma}(Y,A{\xi})\\
=&q(Y)JA{\xi}+A\{{\phi}SY+{\eta}(SY)N\}-{\sigma}(Y,A{\xi})\\
=&-q(Y)AN+A{\phi}SY+{\eta}(SY)AN-{\sigma}(Y,A{\xi})
\end{split}
\end{equation*}
Substituting these formulas into (\ref{e63}) and using the assumption of parallel structure Jacobi operator, we have
\begin{equation}\label{e64}
\begin{split}
0=&{\nabla}_YR_{\xi}(X)\\
=&-g({\phi}SY,X){\xi}-{\eta}(X){\phi}SY\\
&-\{q(Y)g(A{\xi},X)+g(A{\phi}SY,X)+g(SY,{\xi})g(AN,X)\}A{\xi}\\
&-g(X,A{\xi})\{q(Y)A{\xi}+A{\phi}SY+g(SY,{\xi})AN\\
&-g(SY,A{\xi})N\}-\{q(Y)g(X,AN)-g(X,ASY)\}AN\\
&-g(X,AN)\{q(Y)AN-ASY-g(SY,AN)N\}\\
&+(Y{\alpha})SX+{\alpha}({\nabla}_YS)X-(Y{\alpha}^2){\eta}(X){\xi}\\
&-{\alpha}^2g({\phi}SY,X){\xi}-{\alpha}^2{\eta}(X){\phi}SY.
\end{split}
\end{equation}
From this, taking inner product with the Reeb vector field $\xi$, we have
\begin{equation}\label{e65}
\begin{split}
0=&-g({\phi}SY,X)-g(X,A{\xi})g(A{\phi}SY,{\xi})+g(X,AN)g(ASY,{\xi})\\
&+(Y{\alpha}){\alpha}{\eta}(X)+{\alpha}g(({\nabla}_YS)X,{\xi})\\
&-(Y{\alpha}^2){\eta}(X)-{\alpha}^2g({\phi}SY,X).
\end{split}
\end{equation}
Here by the assumption that $M$ is Hopf we can use the following
\begin{equation*}
({\nabla}_YS){\xi}={\nabla}_Y(S{\xi})-S({\nabla}_Y{\xi})=(Y{\alpha}){\xi}+{\alpha}{\phi}SY-S{\phi}SY.
\end{equation*}
Then it follows that
\begin{equation}\label{e66}
{\alpha}g(({\nabla}_YS)X,{\xi})=g({\alpha}(Y{\alpha}){\xi}+{\alpha}^2{\phi}SY-{\alpha}S{\phi}SY,X).
\end{equation}
Taking inner product of (\ref{e64}) with the unit normal $N$, it follows that
\begin{equation}\label{e67}
\begin{split}
0=&-g(X,A{\xi})g(A{\phi}SY,N)+g(X,A{\xi})g(SY,A{\xi})\\
&+g(X,AN)g(ASY,N)+g(X,AN)g(SY,AN).
\end{split}
\end{equation}
From this, putting $X=AN$ and using that $N$ is $\frak A$-isotropic, we have $SAN=0$. This also gives $S{\phi}A{\xi}=0$.
\par
\vskip 6pt
On the other hand, one of the terms $g(SY,A{\xi})$ in (\ref{e64}) becomes
\begin{equation*}
\begin{split}
g(SY,A{\xi})=-g(SY, AJN)=g(SY, JAN)=g(SY, {\phi}AN+{\eta}(AN)N)=-g(A{\phi}SY,N).
\end{split}
\end{equation*}
Substituting this term into (\ref{e67}) gives $S{\phi}AN=0$. Summing up these formulas, we can write
\begin{equation}\label{e68}
SA{\xi}=0,\ SAN=0,\ S{\phi}A{\xi}=0, \text{and}\ S{\phi}AN=0.
\end{equation}
Putting $X={\xi}$ in (\ref{e64}), and using (\ref{e66}) and (\ref{e68}), we have
\begin{equation}\label{e69}
{\phi}SY=-{\alpha}S{\phi}SY.
\end{equation}
In the case when $N$ is $\frak A$-isotropic, Berndt and Suh \cite{BS2} have proved that the Reeb function $\alpha$ is constant. So, we divide into the two cases that ${\alpha}=0$ and ${\alpha}{\not =}0$. For the first case, (\ref{e69}) gives ${\phi}SY=0$, which implies $SY={\alpha}{\eta}(Y){\xi}$, that is, $M$ is totally $\eta$-umbilical.
So the structure tensor commutes with the shape operator, $S{\phi}={\phi}S$. Then by Berndt and Suh \cite{BS2} $M$ is locally congruent to a tube of radius $r$ over a totally geodesic ${\Bbb C}P^k$ in $Q^{2k}$.
But the Reeb function $\alpha$ of this tube never vanishing. So such a case ${\alpha}=0$ can not happen. Finally we consider the case ${\alpha}{\not =}0$.
\par
\vskip 6pt
On the other hand, on the distribution $\mathcal Q$ let us introduce an important formula mentioned in section 3 as follows:
\begin{equation}\label{e610}
2S{\phi}SY-{\alpha}({\phi}S+S{\phi})Y=2{\phi}Y
\end{equation}
for any tangent vector field $Y$ on $M$ in $Q^m$ (see also \cite{BS2} , pages 1350050-11).
So if $SY={\lambda}Y$ in (\ref{e610}), then $(2{\lambda}-{\alpha})S{\phi}Y=({\alpha}{\lambda}+2){\phi}Y$, which gives
\begin{equation}\label{e611}
S{\phi}Y=\frac{{\alpha}{\lambda}+2}{2{\lambda}-{\alpha}}{\phi}Y,
\end{equation}
because if $2{\lambda}-{\alpha}=0$, then ${\alpha}{\lambda} + 2=0$. This implies ${\alpha}^2+4=0$, which gives us a contradiction.
By (\ref{e69}) and (\ref{e610}), we know that
$$-\frac{2+{\alpha}^2}{\alpha}{\phi}S-{\alpha}S{\phi}=2{\phi}.$$
From this, putting $SY={\lambda}Y$ and using (\ref{e611}), we know that
\begin{equation}\label{e612}
\begin{split}
S{\phi}Y=&-\frac{2{\lambda}+{\alpha}^2{\lambda}+2{\alpha}}{{\alpha}^2}{\phi}Y\\
=&\frac{{\alpha}{\lambda}+2}{2{\lambda}-{\alpha}}{\phi}Y.
\end{split}
\end{equation}
Then by a straightforward calculation, we get the following equation
$${\lambda}\{2({\alpha}^2+2){\lambda}+2{\alpha}\}=0.$$
This means ${\lambda}=0$ or ${\lambda}=-\frac{\alpha}{{\alpha}^2+2}$. When ${\lambda}=0$, by (\ref{e612}), $S{\phi}Y=-\frac{2}{\alpha}{\phi}Y$. Then $\frac{2}{\alpha}=\frac{\alpha}{{\alpha}^2+2}$, which gives ${\alpha}^2+4=0$.
This is again a contradiction. So we can assume that the other principal curvature is $-\frac{\alpha}{{\alpha}^2+2}$. Now let us denote this princpal curvature by the function $\beta$.
Accordingly, the shape operator $S$ can be expressed as

\begin{equation*}
S=\begin{bmatrix}
{\alpha} & 0 & 0 & 0 & {\cdots} & 0 & 0 & {\cdots} & 0\\
0 & 0 & 0 & 0 & {\cdots} & 0 & 0 & {\cdots} & 0 \\
0 & 0 & 0 & 0 & {\cdots} & 0 & 0 & {\cdots} & 0 \\
0 & 0 & 0 & {\beta} & {\cdots} & 0 & 0 & {\cdots} & 0\\
{\vdots} & {\vdots} & {\vdots} & {\vdots} & {\ddots} & {\vdots} & {\vdots} & {\cdots} & {\vdots}\\
0 & 0 & 0 & 0 & {\cdots} & {\beta} & 0 & {\cdots} & 0\\
0 & 0 & 0 & 0 & {\cdots} & 0 & {\beta} & {\cdots} & 0 \\
{\vdots} & {\vdots} & {\vdots} & {\vdots} & {\vdots} & {\vdots} & {\vdots} & {\ddots} & {\vdots}\\
0 & 0 & 0 & 0 & {\cdots} & 0 & 0 & {\cdots} & {\beta}
\end{bmatrix}
\end{equation*}
Let us consider the principal curvature $\beta$ such that $SY={\beta}Y$ for the formula (\ref{e69}). It follows that ${\beta}{\phi}Y=-{\alpha}{\beta}S{\phi}Y$. From this, together with the
expression for $S$, we have
\begin{equation*}
\begin{split}
 S{\phi}Y=&{\beta}{\phi}Y\\
 =&-\frac{\beta}{{\alpha}{\beta}}{\phi}Y=-\frac{1}{\alpha}{\phi}Y.
\end{split}
\end{equation*}
Then $-1={\alpha}{\beta}=-\frac{{\alpha}^2}{{\alpha}^2+2}$, which gives us a contradiction.
Accordingly, we conclude that any real hypersurfaces $M$ in $Q^m$ with $\frak A$-isotropic do not admit a parallel structure Jacobi operator.
\par
\vskip 6pt
\begin{re}
Let us check that a tube of radius $r$ $(0<r<\frac{\pi}{2})$ over a totally geodesic ${\Bbb C}P^k$ in $Q^{2k}$. Then by Berndt and Suh \cite{BS1} the tube has a commuting shape operator, that is, $S{\phi}={\phi}S$ and the unit normal $N$ is $\frak A$-isotropic and the Reeb curvature ${\alpha}=g(S{\xi},{\xi})$ is constant. By putting $Y={\xi}$ in (\ref{e64}) and using the properties $g(A{\xi},{\xi})=0$ and $g(AN,N)=0$, we have
\begin{equation}\label{e613}
\begin{split}
0=&-\{q({\xi})g(A{\xi},X)+{\alpha}g(AN,X)\}A{\xi}-g(X,A{\xi})\{q({\xi})A{\xi}+{\alpha}AN\}\\
&+\{q({\xi})g(X,AN)-{\alpha}g(X,A{\xi})\}AN+g(X,AN)\{q({\xi})AN-{\alpha}A{\xi}\}\\
&+{\alpha}({\nabla}_{\xi}S)X .
\end{split}
\end{equation}
Since on such a tube $SA{\xi}=0$ and $SAN=0$, by differentiating we know that $({\nabla}_{\xi}S)A{\xi}=0$. In fact, by the Gauss and Weingarten formulas and using $g(S{\xi},A{\xi})=0$ and ${\nabla}_{\xi}{\xi}=0$, we have
$${\nabla}_{\xi}(A{\xi})=q({\xi})A{\xi}+{\alpha}AN.$$
So we have $S{\nabla}_{\xi}(A{\xi})=q({\xi})SA{\xi}+{\alpha}SAN=0$. By using such a fact, taking inner product of (\ref{e613}) with $A{\xi}$, we have
$$0=q({\xi})g(A{\xi},X)+{\alpha}g(AN,X).$$
From this, by putting $X=AN{\in}T_xM$, $x{\in}M$, it follows that $2{\alpha}g(AN,AN)=0$, which gives ${\alpha}=0$. But this contradicts the poperties of the tube. So the tube does not admit a parallel structure Jacobi operator.
\end{re}

\par
\vskip 6pt
\begin{re}
When we consider that $M$ is locally congruent to a tube of radius $r$, $0<r<\frac{\pi}{2{\sqrt 2}}$, over a totally geodesic and totally real space form $S^m$ in $Q^m$. Then in Berndt and Suh \cite{BS3} it
 was shown that $M$ has three distinct constant principal curvatures ${\alpha}=-{\sqrt 2}\cot({\sqrt 2}r)$, ${\lambda}=0$ and ${\mu}={\sqrt 2}\tan ({\sqrt 2}r)$ with multiplicities $1$, $m-1$ and $m-1$ respectively.
 This is equivalent to ${\phi}S+S{\phi}=k{\phi}$, where $k$ is a constant $k{\not =}0$.  Moreover, the unit normal $N$ of $M$ in $Q^m$ is $\frak A$-principal, that is, $AN=N$, and $A{\xi}=-{\xi}$. If we assume that the structure Jacobi operator on $M$ is parallel, then as remarked in section 5 we know the following
 $$0=({\nabla}_YR_{\xi})X=-2\{g({\phi}SY,X){\xi}+{\eta}(X){\phi}SY\}-q(Y)AX$$
 for any vector fields $X$ and $Y$ on $M$. From this, by putting $X={\xi}$ gives $2{\phi}SY=-q(Y)A{\xi}=q(Y){\xi}$. This gives $q(Y)=0$ for any $Y$ on $M$, so it follows that ${\phi}SY=0$. This means that $SY={\alpha}{\eta}(Y){\xi}$, that is, the tube is totally $\eta$-umbilical. But the tube mentioned above is never $\eta$-umbilical. Accordingly, the tube does not admit any parallel structure Jacobi operator.
 \end{re}
\par
\vskip 6pt
\begin{re}
In \cite{S3} we have classified real hypersurfacees $M$ in complex quadric $Q^m$ with parallel Ricci tensor, according to whether the unit normal $N$ is $\mathfrak A$-principal or $\mathfrak A$-isotropic. When $N$ is $\mathfrak A$-principal, we proved a non-existence property for Hopf hypersurfaces in $Q^m$. For a Hopf real hypersurface $M$ in $Q^m$ with $\mathfrak A$-isotropic we have given a complete classification that it has {\it three distinct constant} principal curvatures.
\end{re}

\par
\vskip 8pt

\end{document}